\definecolor{darkred}{RGB}{100,0,0}
\definecolor{darkgreen}{RGB}{0,100,0}
\definecolor{darkblue}{RGB}{0,0,150}
\def\hc{H}
\newtheorem{thm}{Theorem}
\newtheorem{prp}{Proposition}
\theoremstyle{remark}
\def\beq{\begin{equation}} 
\def\eeq{\end{equation}}
\def\beqn{\begin{eqnarray*}}
\def\eeqn{\end{eqnarray*}}
\def\Bitem{\begin{itemize}\setlength{\itemsep}{.2in}}
\def\bitem{\begin{itemize}\setlength{\itemsep}{.05in}}
\def\eitem{\end{itemize}}
\def\Benum{\begin{enumerate}\setlength{\itemsep}{.2in}}
\def\benum{\begin{enumerate}\setlength{\itemsep}{.05in}}
\def\eenum{\end{enumerate}}
\def\bmult{\begin{multline*}}
\def\emult{\end{multline*}}
\def\bcenter{\begin{center}}
\def\ecenter{\end{center}}
\def\bframe{\begin{frame}}
\def\eframe{\end{frame}}
\newcommand{\thmref}[1]{Theorem~\ref{thm:#1}}
\newcommand{\figref}[1]{Figure~\ref{fig:#1}}
\def\cH{\mathcal{H}}
\def\cN{\mathcal{N}}
\def\bbI{\mathbb{I}}
\newcommand{\E}{\operatorname{\mathbb{E}}}
\renewcommand{\P}{\operatorname{\mathbb{P}}}
\newcommand{\Var}{\operatorname{Var}}
\def\eps{\varepsilon}
\def\1{\mathbbm{1}}
\newcommand{\IND}[1]{\bbI\{ #1 \}}
\definecolor{purple}{rgb}{0.4,.1,.9}
\newcommand\blfootnote[1]{%
  \begingroup
  \renewcommand\thefootnote{}\footnote{#1}%
  \addtocounter{footnote}{-1}%
  \endgroup
}
\begin{document}
\thispagestyle{empty}

\title{The Sparse Variance Contamination Model}
\author{Ery Arias-Castro \and Rong Huang}
\date{}
\maketitle

\begin{abstract}
We consider a Gaussian contamination (i.e., mixture) model where the contamination manifests itself as a change in variance.  We study this model in various asymptotic regimes, in parallel with the work of Ingster (1997) and Donoho and Jin (2004), who considered a similar model where the contamination was in the mean instead.
\end{abstract}

\blfootnote{Both authors are with the Department of Mathematics, University of California, San Diego (\url{math.ucsd.edu}).}

\section{Introduction} \label{sec:intro}

The detection of rare effects becomes important in settings where a small proportion of a population may be affected by a given treatment, for example.  The situation is typically formalized as a contamination model.  Although such models have a long history (e.g., in the theory of robust statistics), we adopt the perspective of \citet{ingster1997some} and \citet{donoho2004higher}, who consider such models in asymptotic regimes where the contamination proportion tends to zero at various rates.  This line of work has mostly focused on models where the effect is a shift in mean, with some rare exceptions \cite{cai2014optimal, cai2011optimal}.  In this paper, instead, we model the effect as a change in variance.  

We consider the following contamination model:
\beq\label{model}
(1 - \eps) \cN(0, 1) + \eps \cN(0, \sigma^2),
\eeq
where $\eps \in [0,1/2)$ is the contamination proportion and $\sigma > 0$ is the standard deviation of the contaminated component.  (Note that this is a Gaussian mixture model with two components.)  Following \cite{ingster1997some, donoho2004higher}, we consider the following hypothesis testing problem: based on $X_1, \dots, X_n$ drawn iid from \eqref{model}, decide
\beq\label{problem}
\cH_0: \eps = 0 \quad \text{versus} \quad \cH_1: \eps > 0,\, \sigma \ne 1.
\eeq

As usual, we study the behavior of the likelihood ratio test, which is optimal in this simple versus simple hypothesis testing problem.
We also study some testing procedures that, unlike the likelihood ratio test, do not require knowledge of the model parameters $(\eps, \sigma)$:
\bitem
\item The {\em chi-squared test} rejects for large values of $\big|\sum_i X_i^2 - n\big|$.
This is the typical variance test when the sample is known to be zero mean.
\item The {\em extremes test} rejects for combines the test that rejects for small values of $\min_i |X_i|$ and the test that rejects for large values of $\max_i |X_i|$ using Bonferroni's method.
\item The {\em higher criticism test} \cite{donoho2004higher} amounts to applying one of the tests proposed by \citet{anderson1952asymptotic} for normality. 
One variant is based on rejecting for large values of
\beq\label{HC}
\sup_{x \ge 0} \frac{\sqrt{n}\, |F_n(x) - \Psi(x)|}{\sqrt{\Psi(x)(1-\Psi(x))}},
\eeq
where $\Psi(x) := 2\Phi(x) - 1$, where $\Phi$ denotes the standard normal distribution, and $F_n(x) := \frac1n \sum_{i=1}^n \IND{|X_i| \le x}$.
\eitem

The testing problem \eqref{problem} was partially addressed by \citet*{cai2011optimal}, who consider a contamination model where the effect manifests itself as a shift in mean and a change in variance.  However, in their setting the variance is fixed, while we let the variance change with the sample size in an asymptotic analysis that is now standard in this literature. 

Our analysis reveals three distinct situations:
\renewcommand{\theenumi}{(\alph{enumi})}
\renewcommand{\labelenumi}{\theenumi}
\benum  \setlength{\itemsep}{0in}
\item {\em Near zero} ($\sigma \to 0$): In the sparse regime, the higher criticism test is as optimal as the likelihood ratio test, while the chi-squared test is powerless and the extremes test is suboptimal.
\item {\em Near one} ($\sigma \to 1$): In the dense regime, the chi-squared test and the higher criticism test are as optimal as the likelihood ratio test, while the extremes test has no power.
\item {\em Away from zero and one}: In the sparse regime, the extremes test and the higher criticism test are as optimal as the likelihood ratio test, while the chi-squared test is asymptotically powerless if $\sigma$ is bounded.
\eenum

In the tradition of \citet{ingster1997some}, we set
\beq\label{eps}
\eps = n^{-\beta}, \quad \beta \in (0, 1) \text{ fixed}.
\eeq
The setting where $\beta \le 1/2$ is often called the dense regime while the setting where $\beta > 1/2$ is often called the sparse regime.  (Note that the setting where $\beta > 1$ is uninteresting since in that case there is no contamination with probability tending to~1.)

\section{The likelihood ratio test}

We start with bounding the performance of the likelihood ratio test.  As this is the most powerful test by the Neyman-Pearson Lemma, this bound also applies to any other test.  
We say that a testing procedure is asymptotically powerless if the sum of its probabilities of Type I and Type II errors (its risk) has limit inferior at least~1 in the large sample asymptote.

\subsection{Near zero}
Consider the testing problem \eqref{problem} in the regime where $\sigma = \sigma_n \to 0$ as $n\to\infty$.  More specifically, we adopt the following parameterization as it brings into focus the first-order asymptotics:
\beq\label{sigma0}
\sigma = n^{-\gamma}, \quad \gamma > 0 \text{ fixed}.
\eeq  

\begin{thm}\label{thm:LR0}
For the testing problem \eqref{problem} with parameterization \eqref{eps} and \eqref{sigma0}, the likelihood ratio test (and then any other test procedure) is asymptotically powerless when 
\beq\label{LR0}
\gamma < 2\beta -1.
\eeq
\end{thm}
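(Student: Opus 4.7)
The plan is to apply the standard chi-squared second-moment lower bound on risk, as is customary in this literature. Let $P_0^n$ denote the null joint distribution and $P_1^n$ the alternative joint distribution corresponding to fixed parameters $(\eps, \sigma)$, and write $L_n := dP_1^n/dP_0^n$ for the likelihood ratio. Since the problem is simple versus simple once $(\eps, \sigma)$ are specified, the risk of the likelihood ratio test equals $1 - \tfrac{1}{2}\|P_1^n - P_0^n\|_{\mathrm{TV}}$, and by Cauchy--Schwarz, $\|P_1^n - P_0^n\|_{\mathrm{TV}}^2 \le \mathbb{E}_0[L_n^2] - 1$. Thus it suffices to show $\mathbb{E}_0[L_n^2] \to 1$.

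By the iid structure, $\mathbb{E}_0[L_n^2] = (\mathbb{E}_0[L^2])^n$, where the per-sample likelihood ratio is $L(x) = 1 - \eps + \eps\, g(x)$, with $g(x) := (1/\sigma)\phi(x/\sigma)/\phi(x)$ and $\phi$ the standard normal density. Since $g\phi$ is itself a density, $\mathbb{E}_0[g] = 1$, so expanding the square yields
\[
\mathbb{E}_0[L^2] = 1 + \eps^2 \bigl(\mathbb{E}_0[g^2] - 1\bigr).
\]
A one-line Gaussian integration (completing the square in the exponent) gives $\mathbb{E}_0[g^2] = 1/(\sigma\sqrt{2 - \sigma^2})$, valid whenever $\sigma < \sqrt{2}$. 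In the regime $\sigma \to 0$, this is $\sim 1/(\sigma\sqrt{2})$, so $\mathbb{E}_0[L^2] - 1 \sim \eps^2/(\sigma\sqrt{2})$.

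Substituting the parameterization \eqref{eps} and \eqref{sigma0} gives $n(\mathbb{E}_0[L^2] - 1) \sim n^{1 - 2\beta + \gamma}/\sqrt{2}$, which tends to $0$ precisely when $\gamma < 2\beta - 1$. Under this condition, $\mathbb{E}_0[L_n^2] = (1 + o(1/n))^n \to 1$, hence $\|P_1^n - P_0^n\|_{\mathrm{TV}} \to 0$ and the risk of the likelihood ratio test tends to $1$. There is no serious obstacle: the only genuine computation is the Gaussian integral for $\mathbb{E}_0[g^2]$, and the rest is standard tensorization of the chi-squared divergence combined with asymptotic bookkeeping. The one point requiring a touch of care is ensuring that the leading-order approximation $\log(1+x)\sim x$ is used in a regime where the remainder is genuinely negligible, which is guaranteed by $n\eps^2/\sigma \to 0$.
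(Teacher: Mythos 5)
Your argument is correct and is essentially the paper's own proof: the same second-moment (chi-squared) bound via Cauchy--Schwarz, the same tensorization $\E_0[L^2]=(\E_0[L_1^2])^n$, the same Gaussian computation giving $\E_0[L_1^2]=1+\eps^2\bigl([\sigma^2(2-\sigma^2)]^{-1/2}-1\bigr)$, and the same conclusion that $n\eps^2/\sigma\to 0$ exactly when $\gamma<2\beta-1$. No gaps; the only differences are notational (total-variation phrasing versus $\E_0|L-1|$).
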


\begin{proof}
The likelihood ratio is 
\beq
L := \prod_{i=1}^n L_i,
\eeq
where $L_i$ is the likelihood ratio for observation $X_i$, which in this case is
\begin{align}
L_i 
&= \frac{\frac{1 - \eps}{\sqrt{2\pi}} \exp(-\frac12 X_i^2) + \frac\eps{\sqrt{2\pi} \sigma} \exp(-\frac1{2\sigma^2} X_i^2)}{\frac1{\sqrt{2\pi}} \exp(-\frac12 X_i^2)} \\
&= 1 -\eps + \frac{\eps}{\sigma} \exp\bigg(\frac{\sigma^2 -1}{2 \sigma^2} X_i^2\bigg)
\end{align}

The risk of the likelihood ratio test is equal to
\beq
{\rm risk}(L) := 1 - \frac12 \E_0 |L - 1|.
\eeq
Our goal is to show that ${\rm risk}(L) = 1 + o(1)$ under the stated conditions.
When $\sigma$ is below and bounded away from $\sqrt{2}$, it turns out that a crude method, the so-called 2nd moment method which relies on the Cauchy-Schwarz Inequality, is enough to lower bound the risk.
Indeed, by the Cauchy-Schwarz Inequality,
\beq
{\rm risk}(L) \ge 1 - \frac12 \sqrt{\E_0[L^2] - 1},
\eeq
and we are left with the task of finding conditions under which $\E_0[L^2] \le 1 + o(1)$. 

We have
\beq
\E_0[L^2] 
= \prod_{i=1}^n \E_0[L_i^2]
= (\E_0[L_1^2])^n,
\eeq
where
\begin{align}
\E_0[L_1^2] 
&= \E_0\bigg[\bigg(1 -\eps + \frac{\eps}{\sigma} \exp\bigg(\frac{\sigma^2 -1}{2 \sigma^2} X_1^2\bigg)\bigg)^2\bigg]\\
& = 1 - \eps^2 + \frac{\eps^2}{\sigma^2} \E_0\bigg[\exp\bigg(\frac{\sigma^2 -1}{\sigma^2} X_1^2\bigg)\bigg]\\ 
& = 1 - \eps^2 + \eps^2 \big[\sigma^2(2-\sigma^2)\big]^{-1/2}\\
& = 1 + \eps^2 \Big(\big[\sigma^2(2-\sigma^2)\big]^{-1/2} - 1\Big).
\end{align}
Therefore, 
\beq
\E_0[L^2]
= \bigg[1 + \eps^2 \Big(\big[\sigma^2(2-\sigma^2)\big]^{-1/2} - 1\Big)\bigg]^n
\le \exp\bigg[n \eps^2 \Big(\big[\sigma^2(2-\sigma^2)\big]^{-1/2} - 1\Big)\bigg],
\eeq
so that $\E_0[L^2] \le 1 + o(1)$ when 
\beq\label{2nd-moment}
n \eps^2 \Big(\big[\sigma^2(2-\sigma^2)\big]^{-1/2} - 1\Big) \to 0.
\eeq
Plugging in the parameterization \eqref{eps} and \eqref{sigma0}, we immediately see that this condition is fulfilled when \eqref{LR0} holds, and this concludes the proof.
\end{proof}

%

\subsection{Near one}
Consider the testing problem \eqref{problem} in the regime where $\sigma^2 \to 1$.  More specifically, we adopt the following parameterization:
\beq\label{sigma1}
|\sigma - 1| = n^{-\gamma}, \quad \gamma > 0 \text{ fixed}.
\eeq  

\begin{thm}\label{thm:LR1}
For the testing problem \eqref{problem} with parameterization \eqref{eps} and \eqref{sigma1}, the likelihood ratio test (and then any other test procedure) is asymptotically powerless when 
\beq\label{LR1}
\gamma > 1/2 - \beta.
\eeq
\end{thm}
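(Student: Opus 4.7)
The plan is to follow the same second-moment (Cauchy--Schwarz) approach used in the proof of Theorem~\ref{thm:LR0}. Because $\sigma \to 1$ here, the variance parameter $\sigma^2$ stays bounded away from the critical value $2$ for all $n$ sufficiently large, so the computation of $\E_0[L_1^2]$ carried out there remains valid and gives
\beq
\E_0[L_1^2] - 1 = \eps^2 \Big(\big[\sigma^2(2-\sigma^2)\big]^{-1/2} - 1\Big).
\eeq
As in the previous proof, it suffices to show that $n$ times the right-hand side tends to zero in order to conclude via $\mathrm{risk}(L) \ge 1 - \tfrac12 \sqrt{\E_0[L^2] - 1}$.

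The main step is a Taylor expansion around $\sigma = 1$. Writing $\sigma = 1 + \delta$ with $|\delta| = n^{-\gamma} \to 0$, a direct computation yields $\sigma^2(2-\sigma^2) = 1 - 4\delta^2 + O(\delta^3)$, and hence
\beq
\big[\sigma^2(2-\sigma^2)\big]^{-1/2} - 1 = 2 \delta^2 + O(\delta^3) \sim 2\, n^{-2\gamma}.
\eeq
Combining this with $\eps = n^{-\beta}$, the second-moment condition reduces to $n^{1 - 2\beta - 2\gamma} \to 0$, which is exactly the stated hypothesis $\gamma > 1/2 - \beta$. The conclusion then follows verbatim from the endgame of Theorem~\ref{thm:LR0}.

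There is no serious obstacle here: the argument is a near-mechanical adaptation of the near-zero case, with the Taylor expansion replacing the trivial bound available when $\sigma \to 0$. What one should notice, however, is that in the sparse regime $\beta > 1/2$ the threshold $1/2 - \beta$ is negative, so the theorem covers every $\gamma > 0$ automatically; in particular, no contamination with $\sigma$ asymptotically equal to $1$ can be detected in the sparse regime, regardless of the rate at which $\sigma \to 1$.
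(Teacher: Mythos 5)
Your proposal is correct and follows exactly the paper's route: the paper likewise disposes of this case by returning to the second-moment condition \eqref{2nd-moment} from the proof of \thmref{LR0} and plugging in \eqref{eps} and \eqref{sigma1}; your Taylor expansion $[\sigma^2(2-\sigma^2)]^{-1/2}-1 \sim 2n^{-2\gamma}$ simply makes explicit the computation the paper leaves as ``immediate.'' The closing observation about the sparse regime is accurate as well.
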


\begin{proof}
Restarting the proof of \thmref{LR0} at \eqref{2nd-moment}, and plugging in the parameterization \eqref{eps} and \eqref{sigma1}, we immediately see that $\E_0[L^2] \le 1 + o(1)$ when \eqref{LR1} holds.
\end{proof}

\subsection{Away from zero and one}
Consider the testing problem \eqref{problem} in the regime where $\sigma$ is fixed away from 0 and 1.  (Some of the results developed in this section are special cases of results in \cite{cai2011optimal}.)  

\begin{thm}
For the testing problem \eqref{problem} with parameterization \eqref{eps} and $\sigma > 0$ is fixed, the likelihood ratio test (and therefore any other test) is asymptotically powerless when $\beta > 1/2$ and 
\beq\label{LR1+}
\sigma < 1/\sqrt{1-\beta}.
\eeq
\end{thm}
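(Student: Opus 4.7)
The plan is to adapt the second-moment bound from \thmref{LR0} via truncation, since the bare version fails as soon as $\sigma^2 \ge 2$: the Gaussian moment generating function appearing inside $\E_0[L_1^2]$ then diverges. Since $1/\sqrt{1-\beta} > \sqrt{2}$ whenever $\beta > 1/2$, there is a genuine gap to cover. The range $\sigma^2 < 2$ is already handled by the bare calculation---it yields $\E_0[L^2] \to 1$ whenever $n\eps^2 \to 0$, which holds throughout the sparse regime---so we focus on $\sigma^2 \in [2,\, 1/(1-\beta))$.

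The standard remedy is to pass to a truncated alternative: set $\tau_n = \sqrt{c \log n}$ for a constant $c>0$ to be chosen, and let $\tilde P_1$ be the modification of $P_1$ in which the $\cN(0,\sigma^2)$ component is conditioned on $\{|X| \le \tau_n\}$. By the triangle inequality, it suffices to show that both $\|\tilde P_1^n - P_1^n\|_{TV}$ and $\|P_0^n - \tilde P_1^n\|_{TV}$ tend to zero. The first is a tail estimate: tensorizing total variation and using a standard Gaussian tail bound yields $\|\tilde P_1^n - P_1^n\|_{TV} \lesssim n\eps\, e^{-\tau_n^2/(2\sigma^2)}$, which is $o(1)$ provided $c > 2\sigma^2(1-\beta)$.

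For the other term, apply Cauchy-Schwarz exactly as in \thmref{LR0}, but with $\tilde L_i := d\tilde P_1/dP_0$ (evaluated at $X_i$) in place of $L_i$. The cross-term in the expansion of $\E_0[\tilde L_1^2]$ cancels just as before (a generic property of mixture likelihood ratios), and the surviving contribution reduces to $\eps^2 M_n$ (up to a constant factor), with $M_n := \int_{|x|\le\tau_n} \exp((\sigma^2-1)x^2/\sigma^2) \phi(x)\,dx$. Setting $\kappa := (\sigma^2-2)/\sigma^2 > 0$, the classical Laplace-type asymptotic $\int_0^T e^{\kappa x^2/2}\,dx \sim e^{\kappa T^2/2}/(\kappa T)$ gives $M_n \asymp e^{\kappa c (\log n)/2}/\sqrt{\log n}$, so that $n\eps^2 M_n \to 0$ precisely when $c < 2\sigma^2(2\beta-1)/(\sigma^2-2)$.

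The two constraints on $c$ can be satisfied simultaneously precisely when $\sigma^2 < 1/(1-\beta)$, which is exactly the hypothesis \eqref{LR1+}; picking any $c$ in this (now nonempty) interval completes the argument. The delicate point is the balancing of the truncation scale: $c$ must be large enough for the truncation to be invisible under $P_1$, yet small enough to keep the truncated second moment under $P_0$ manageable. The algebraic coincidence that these two requirements meet exactly at $\sigma^2 = 1/(1-\beta)$ is what produces the detection boundary.
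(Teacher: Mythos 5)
Your argument is correct and, at bottom, it is the same method as the paper's: a truncated second moment bound. The packaging differs enough to be worth comparing. The paper truncates the likelihood ratio itself, setting $\bar L_i = L_i \IND{|X_i|\le\sqrt{2\log n}}$ with a \emph{fixed} threshold, and must then control two quantities: the truncated second moment $\E_0[\bar L^2]$ and the first moment $\E_0[\bar L]$, the latter measuring the mass lost to truncation (it equals $\P_1(\max_i|X_i|\le\sqrt{2\log n})$, so it plays exactly the role of your tail estimate, just computed under $\E_0$ against $L_i$ rather than as a total variation bound under the alternative). You instead replace the alternative by a genuine probability measure $\tilde P_1$ whose contaminated component is conditioned on $\{|X|\le\tau_n\}$; this makes the first moment of your $\tilde L$ identically $1$, trades the paper's first-moment computation for a sub-additivity bound on $\|\tilde P_1^n - P_1^n\|_{TV}$, and lets you carry a free truncation exponent $c$. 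The payoff of your version is conceptual: the detection boundary $\sigma^2 = 1/(1-\beta)$ emerges transparently as the point where the window $2\sigma^2(1-\beta) < c < 2\sigma^2(2\beta-1)/(\sigma^2-2)$ closes, whereas in the paper the fixed choice $c=2$ happens (after the fact) to satisfy both of its constraints exactly up to that same boundary. Your preliminary reduction --- the bare second moment of \thmref{LR0} already gives $\E_0[L^2]\to 1$ for any fixed $\sigma^2<2$ once $n\eps^2\to0$ --- is also correct and correctly motivates restricting the truncation argument to $\sigma^2\ge 2$.

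One small loose end: you announce the range $\sigma^2\in[2,\,1/(1-\beta))$ but then set $\kappa=(\sigma^2-2)/\sigma^2>0$ and invoke the Laplace asymptotic $\int_0^T e^{\kappa x^2/2}\,dx\sim e^{\kappa T^2/2}/(\kappa T)$, which degenerates at $\sigma^2=2$ (where $\kappa=0$). That endpoint is harmless --- there $M_n\asymp\tau_n\asymp\sqrt{\log n}$ and $n\eps^2\sqrt{\log n}\to0$ for $\beta>1/2$, and the upper constraint on $c$ becomes vacuous --- but you should either treat it separately in one line or restrict the truncation case to $\sigma^2>2$ and fold $\sigma^2=2$ into a limiting version of the bare bound. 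This is a presentational gap, not a mathematical one.
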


\begin{proof}
We use a refinement of the second moment method, sometimes called the  truncated second moment method, which is based on bounding the moments of a thresholded version of the likelihood ratio.
Define the indicator variable $D_i = \IND{|X_i| \le \sqrt{2 \log n}}$ and the corresponding truncated likelihood ratio
\beq
\bar L = \prod_{i=1}^n \bar L_i, \quad \bar L_i := L_i D_i.
\eeq  

Using the triangle inequality, the fact that $\bar L\leq L$, and the Cauchy-Schwarz Inequality, we have the following upper bound:
\begin{align}
\E_0[|L-1|]
&\le \E_0[|\bar L-1|]+\E_0[L-\bar L] \\
&\le \big[ \E_0[\bar L^2] - 1 + 2 (1 - \E_0[\bar L]) \big]^{1/2} + (1-\E_0[\bar L]) \ ,
\end{align}
so that ${\rm risk}(L) \ge 1 + o(1)$ when $\E_0[\bar L^2] \le 1 + o(1)$ and $\E_0[\bar L] \ge 1 + o(1)$. 

For the first moment, we have
\beq
\E_0[\bar L] 
= \prod_{i=1}^n \E_0[\bar L_i]
= \E_0[\bar L_1]^n,
\eeq
so that it suffices to prove that $\E_0[\bar L_1] \ge 1 - o(1/n)$.
We develop
\begin{align}
\E_0[\bar L_1]
&= \E_0\bigg[\bigg(1 -\eps + \frac{\eps}{\sigma} \exp\bigg(\frac{\sigma^2 -1}{2 \sigma^2} X_1^2\bigg) \bigg) D_1\bigg] \\
&= (1 -\eps) (1-2\bar\Phi(\sqrt{2 \log n})) + \eps (1-2\bar\Phi(\sqrt{2 \log n}/\sigma)) \\
&= (1 -\eps) (1 - O(n^{-1}/\sqrt{\log n})) + \eps (1 - O(n^{-1/\sigma^2}/\sqrt{\log n})) \\
&= 1 - o(1/n) - o(\eps n^{-1/\sigma^2}),
\end{align}
where $\bar\Phi$ is the standard normal survival function.
We used the well-known fact that $\bar\Phi(t) \sim e^{-t^2/2}/\sqrt{2\pi} t$ as $t \to \infty$.
Since $\eps = n^{-\beta}$ with $\beta > 1/2$, and \eqref{LR1+} holds, we have $\eps n^{-1/\sigma^2} = o(1/n)$, so that $\E_0[\bar L_1] \ge 1 - o(1/n)$.

For the second moment, we have
\beq
\E_0[\bar L^2] 
= \prod_{i=1}^n \E_0[\bar L_i^2]
= \E_0[\bar L_1^2]^n,
\eeq
so that it suffices to prove that $\E_0[\bar L_1^2] \le 1 + o(1/n)$.
We develop
\begin{align}
\E_0[\bar L_1^2]
&= \E_0\bigg[\bigg(1 -\eps + \frac{\eps}{\sigma} \exp\bigg(\frac{\sigma^2 -1}{2 \sigma^2} X_1^2\bigg)\bigg)^2 D_1\bigg] \\
&= (1 -\eps)^2 (1-2\bar\Phi(\sqrt{2 \log n})) + 2 (1-\eps) \eps\, (1-2\bar\Phi(\sqrt{2 \log n}/\sigma))  + \frac{\eps^2}{\sigma^2} \E_0\bigg[\exp\bigg(\frac{\sigma^2-1}{\sigma^2} X_1^2\bigg) D_1\bigg]\\
&\le 1-\eps^2 + \frac{\eps^2}{\sqrt{2\pi}\sigma^2} \int_{-\sqrt{2 \log n}}^{\sqrt{2 \log n}} \exp\bigg(\bigg(\frac{\sigma^2-1}{\sigma^2} - \frac{1}{2}\bigg) x^2\bigg) {\rm d}x\\
&\le 1 + O\bigg(\eps^2 \exp\bigg(\frac{(\sigma^2-2)_+}{\sigma^2} \log n\bigg) \sqrt{\log n} \bigg).
\end{align}
Hence, it suffices that $-2\beta + (\sigma^2-2)_+/\sigma^2 < -1$, which is equivalent to \eqref{LR1+}.
\end{proof}

\section{Other tests}

Having studied the performance of the likelihood ratio test, we  now turn to studying the performance of the chi-squared test, the extremes test, and the higher criticism test.  These tests are more practical in that they do not require knowledge of the  parameters driving the alternative, $(\eps, \sigma)$, to be implemented.

\subsection{The chi-squared test}
The chi-squared test is the classical variance test.  It happens to only be asymptotically powerful in the dense regime when $\sigma$ is bounded away from 1.

\begin{prp}\label{prp:chi2}
For the testing problem \eqref{problem} with parameterization \eqref{eps}, the chi-squared test is asymptotically powerful when $\beta < 1/2$ and either $\sigma$ is bounded away from~1 or \eqref{sigma1} holds with $\gamma < 1/2 - \beta$.
The chi-squared test is asymptotically powerless when $\beta > 1/2$ and $\sigma$ is bounded.
\end{prp}

\begin{proof}
We divide the proof into the two regimes.

\medskip\noindent
{\em Dense regime ($\beta < 1/2$).}
We show that there is a chi-squared test that is asymptotically powerful when $\beta < 1/2$.  To do so, we use Chebyshev's inequality.
Under $\cH_0$, $W := \sum_{i=1}^n X_i^2$ has the chi-squared distribution with $n$ degrees of freedom.  But using only the fact that $\E_0(W)=n$ and $\Var_0(W)=2n$, by Chebyshev's inequality, we have 
\beq
\P_0(|W - n| \ge a_n \sqrt{n}) \to 0,
\eeq
for any sequence $(a_n)$ diverging to infinity.  
Under $\cH_1$, $\E_1(W) = n(1-\eps + \eps \sigma^2)$ and $\Var_1(W)=2n(1-\eps + \eps \sigma^4)$.  Note that $\Var_1(W) \le 2n$ eventually. By Chebyshev's inequality,
\beq
\P_1(|W - n(1-\eps + \eps \sigma^2)| \ge a_n \sqrt{n}) \to 0.
\eeq
We choose $a_n = \log n$ and consider the test with rejection region $\{|W - n| \ge a_n \sqrt{n}\}$.  This test is asymptotically powerful when, eventually, 
\beq
|n(1-\eps + \eps \sigma^2) - n| \ge 2 a_n \sqrt{n},
\eeq
meaning,
\beq
|\sigma^2 - 1|  \eps \sqrt{n} \ge 2 a_n.
\eeq
This is the case when $\beta < 1/2$ with no condition on $\sigma$ other than remaining bounded away from 1, and also when \eqref{sigma1} holds and $\gamma < 1/2 - \beta$.

\medskip\noindent
{\em Sparse regime ($\beta > 1/2$).}
To prove that the chi-squared procedure is asymptotically powerless when $\beta > 1/2$, we argue in terms of convergence in distribution rather than the simple bounding of moments.  
Under $\cH_0$, the usual Central Limit Theorem implies that $(W-n)/\sqrt{2n}$ converges weakly to the standard normal distribution.
Under $\cH_1$, the same is true using the Lyapunov Central Limit Theorem for triangular arrays.  Indeed, even though the distribution of $X_1, \dots, X_n$ depends on $(n, \eps)$, uniformly
\beq
\frac{\sum_{i=1}^n \E_1\big[(X_i^2 - 1)^4\big]}{\Big(\sum_{i=1}^n \E_1\big[(X_i^2 - 1)^2\big]\Big)^2} 
= \frac{n \E_1\big[(X_1^2 - 1)^4\big]}{n^2 \Big(\E_1\big[(X_1^2 - 1)^2\big]\Big)^2} \asymp 1/n \to 0,
\eeq
so that $(W-\E_1(W))/\sqrt{\Var_1(W)}$ converges weakly to the standard normal distribution.
Since 
\beq
\frac{W-\E_1(W)}{\sqrt{\Var_1(W)}} = \bigg(\frac{W - n}{\sqrt{2n}} + \frac{n - \E_1(W)}{\sqrt{2n}}\bigg) \frac{\sqrt{2n}}{\sqrt{\Var_1(W)}},
\eeq
with
\beq
\E_1(W) 
= n(1-\eps + \eps \sigma^2)
= n + o(\sqrt{n}), \quad \text{(since $\beta > 1/2$)},
\eeq
and
\beq
\Var_1(W)
= \sum_{i=1}^n \E_1\big[(X_i^2 - 1)^2\big] 
= 2n(1-\eps + \eps \sigma^4)
\sim 2n, \quad \text{(since $\sigma$ is bounded)},
\eeq
it is also the case that $(W-n)/\sqrt{2n}$ converges weakly to the standard normal distribution.
Hence, there is no test based on $W$ that has any asymptotic power.
\end{proof}

\subsection{The extremes test}
The extremes test, as the name indicates, focuses on the extreme observations, disregarding the rest of the sample.  It happens to be suboptimal in the setting where $\sigma \to 0$, while it achieves the detection boundary in the sparse regime in the setting where $\sigma$ is fixed. 

\begin{prp} \label{prp:extremes}
For the testing problem \eqref{problem} with parameterization \eqref{eps} and \eqref{sigma0}, the extremes test is asymptotically powerful when $\gamma > \beta$ (and asymptotically powerless when $\gamma < \beta$).
If instead $\sigma > 0$ is fixed, the extremes test is asymptotically powerful when $\sigma > 1/\sqrt{1-\beta}$ (and asymptotically powerless when $\sigma < 1/\sqrt{1-\beta}$).
\end{prp}

\begin{proof}
Under $\cH_0$, for any $a_n \to \infty$, we have 
\begin{align}
\P_0\big(\min_i |X_i| \ge 1/n a_n\big) 
&= \big[\P_0\big(|X_1| \ge 1/n a_n\big)\big]^n\\
&= \big[2\bar\Phi(1/n a_n)\big]^n\\
&= \big[1 - O(1/n a_n)\big]^n 
\to 1.
\end{align}
Similarly, as is well-known,
\beq
\P_0\big(\max_i |X_i| \le \sqrt{2 \log n}\big) \to 1.
\eeq
We thus consider the test with rejection region $\{\min_i |X_i| \le 1/n\log n\} \cup \{\max_i |X_i| \ge \sqrt{2 \log n}\}$.

We now consider the alternative.  
We first consider the case where \eqref{sigma0} holds.  We focus on the main sub-case where, in addition, $\gamma < 1$.
Let $I \subset \{1, \dots, n\}$ index the contaminated observations, meaning those sampled from $\cN(0, \sigma^2)$.  In our mixture model, $|I|$ is binomial with parameters $(n, \eps)$.
Let $Z_1, \dots, Z_n$ be iid standard normal variables and set $b_n = \sigma n \log n$.
We have
\begin{align}
\P_1\big(\min_i |X_i| \le 1/n \log n\big) 
&\ge \P_1\big(\min_{i \in I} |X_i| \le 1/n \log n\big) \\
&= 1 -  \E\big[\P\big(\min_{i \in I} |Z_i| \ge 1/b_n \mid I\big)\big] \\
&= 1 - \E\big[(2\bar\Phi(1/b_n))^{|I|}\big] \\
&= 1 - \big[1 - \eps + \eps 2\bar\Phi(1/b_n)\big]^n.
\end{align}
Since we have assumed that $\gamma < 1$ in \eqref{sigma0}, we have $1/b_n \to 0$, and therefore 
\beq
2\bar\Phi(1/b_n) 
= 1 - \frac{2 + o(1)}{\sqrt{2\pi} b_n}.
\eeq
This in turn implies that 
\beq
\big[1 - \eps + \eps 2\bar\Phi(1/b_n)\big]^n 
= \bigg[1 - \frac{(2 + o(1)) \eps}{\sqrt{2\pi} b_n}\bigg]^n 
\to 0
\eeq 
when $n \eps/b_n \to \infty$, which is the case when $\gamma > \beta$. 

Assume instead that $\gamma < \beta$. 
Fix a level $\alpha \in (0,1)$ and consider the extremes test at that level.  Based on the same calculations, this test has rejection region $\{\min_i |X_i| \le c_n\} \cup \{\max_i |X_i| \ge d_n\}$, where $c_n$ and $d_n$ are defined by $[2\bar\Phi(c_n)]^n = 1 -\alpha/2$ and $[2\Phi(d_n) -1]^n = 1 -\alpha/2$, respectively.
Note that
\beq
c_n \sim - \sqrt{\pi/2}\, \log(1 -\alpha/2)/n, \quad 
d_n \sim \sqrt{2 \log n}.
\eeq
For the minimum, we have
\begin{align}
\P_1\big(\min_i |X_i| \le c_n\big) 
&\le \P_1\big(\min_{i \nin I} |X_i| \le c_n\big) + \P_1\big(\min_{i \in I} |X_i| \le c_n\big).
\end{align}
Let $Z_1, \dots, Z_n$ be iid standard normal variables.
Clearly,
\beq
\P_1\big(\min_{i \nin I} |X_i| \le c_n\big)
\le \P\big(\min_i |Z_i| \le c_n\big) = \alpha/2,
\eeq
and, as was derived above, 
\begin{align}
\P_1\big(\min_{i \in I} |X_i| \le c_n\big)
&= \P_1\big(\min_{i \in I} |Z_i| \le c_n/\sigma\big) \\
&= 1 - \big[1 - \eps + \eps 2\bar\Phi(c_n/\sigma)\big]^n,
\end{align}
with
\beq
\big[1 - \eps + \eps 2\bar\Phi(c_n/\sigma)\big]^n
= \bigg[1 - \frac{(2 + o(1)) \eps c_n}{\sqrt{2\pi} \sigma}\bigg]^n \to 1,
\eeq
since $\eps c_n/\sigma \asymp n^{-1 -\beta + \gamma} = o(1/n)$.
Thus, $\P_1(\min_i |X_i| \le c_n) \to 0$.  
And since $\max_i |X_i|$ under the alternative is stochastically bounded from above by its distribution under the null (since $\sigma < 1$), we also have $\P_1(\max_i |X_i| \ge d_n) \to 0$.
Hence, the extremes test (at level $\alpha$ arbitrary) has asymptotic power $\alpha$, meaning it is asymptotically powerless.  (It is no better than random guessing.)

Next, we consider the case where $\sigma$ is fixed.
Following similar arguments, now with $b_n = \sigma^{-1}\sqrt{2 \log n}$, we have
\begin{align}
\P_1\big(\max_i |X_i| \ge \sqrt{2 \log n}\big) 
&\ge \P_1\big(\max_{i \in I} |X_i| \ge \sqrt{2 \log n}\big) \\
&= 1 - \E\big[\P\big(\max_{i \in I} |Z_i| \le b_n \mid I\big)\big] \\
&= 1 - \E\big[(2 \Phi(b_n) - 1)^{|I|}\big] \\
&= 1- \big[1 - \eps + \eps (2\Phi(b_n) - 1)\big]^n.
\end{align}
We have
\beq
2 \Phi(b_n) - 1 \asymp 1 - o(n^{-1/\sigma^2}),
\eeq
so that
\beq
\big[1 - \eps + \eps (2 \Phi(b_n) - 1)\big]^n
\asymp \big[1 -o(\eps n^{-1/\sigma^2})\big]^n
\to 0
\eeq
when $n \eps n^{-1/\sigma^2} \to \infty$, which is the case when $\sigma > 1/\sqrt{1-\beta}$.

Using a similar line of arguments, it can also be shown that the test is asymptotically powerless when $\sigma < 1/\sqrt{1-\beta}$ is fixed.
\end{proof}

\subsection{The higher criticism test}
The higher criticism, which looks at the entire sample via excursions of its empirical process, happens to achieve the detection boundary in all regimes, and is thus (first-order) comparable to the likelihood ratio test while being adaptive to the model parameters.

\begin{prp} \label{prp:hc}
For the testing problem \eqref{problem} with parameterization \eqref{eps}, the higher criticism test is asymptotically powerful when either \eqref{sigma0} holds with $\gamma > 2\beta - 1$, or \eqref{sigma1} holds with $\gamma < 1/2 - \beta$, or $\sigma > 1/\sqrt{1 - \beta}$ is fixed, or $\beta < 1/2$ and $\sigma \ne 1$ is fixed.
\end{prp}

\begin{proof}
Let $\hc$ denote the higher criticism statistic \eqref{HC}.  \citet{jaeschke1979asymptotic} derived the asymptotic distribution of $\hc$ under the null, and this weak convergence result in particular implies that 
\beq
\P_0\big(\hc \ge \sqrt{3 \log \log n}\big) \to 0.
\eeq
For simplicity, because it is enough for our purposes, we consider the test with rejection region $\{\hc \ge \log n\}$.
Note that the test is asymptotically powerful if, under the alternative, there is $t_n \ge 0$ such that
\beq
\frac{\sqrt{n}\, |F_n(t_n) - \Psi(t_n)|}{\sqrt{\Psi(t_n)(1-\Psi(t_n))}} \ge \log n
\eeq
with probability tending to 1.
To establish this, we will apply Chebyshev's inequality.  Indeed, $F_n(t)$ is binomial with parameters $n$ and $\Lambda(t) := (1-\eps) \Psi(t) + \eps \Psi(t/\sigma)$, so that
\beq
\frac{\sqrt{n}\, |F_n(t_n) - \Lambda(t_n)|}{\sqrt{\Lambda(t_n) (1- \Lambda(t_n))}} \le \log n
\eeq
with probability tending to 1.
When this is the case, we have
\beq
\frac{\sqrt{n}\, |F_n(t_n) - \Psi(t_n)|}{\sqrt{\Psi(t_n)(1-\Psi(t_n))}} \ge u_n - (\log n) \sqrt{v_n},
\eeq
where
\beq
u_n := \frac{\sqrt{n}\, \eps\, |\Psi(t_n/\sigma) - \Psi(t_n)|}{\sqrt{\Psi(t_n)(1-\Psi(t_n))}}, \quad
v_n := \frac{\Lambda(t_n) (1- \Lambda(t_n))}{\Psi(t_n)(1-\Psi(t_n))},
\eeq
and only need to prove that
\beq\label{HC-power}
u_n \ge (\sqrt{v_n} + 1) \log n.
\eeq

First, assume that \eqref{sigma0} holds with $\gamma > 2\beta - 1$. We focus on the interesting sub-case where $\gamma < \beta$. Fix $q$ such that $q > \gamma$ and $1/2 - \beta - q/2 + \gamma > 0$ and set $t_n = n^{-q}$.  Then, using the fact that $\eps/\sigma = n^{\gamma - \beta} = o(1)$, we have 
\beq
\Psi(t_n) \asymp t_n, \quad
\Psi(t_n/\sigma) \asymp t_n/\sigma, \quad
\Lambda(t_n) \asymp t_n + \eps t_n/\sigma \asymp t_n,
\eeq
so that
\beq
u_n \asymp \sqrt{n} \eps (t_n/\sigma)/\sqrt{t_n} = n^{1/2 - \beta - q/2 + \gamma} \gg \log n, \quad
v_n \asymp 1,
\eeq
and therefore \eqref{HC-power} is fulfilled, eventually.

Next, we assume that \eqref{sigma1} holds with $\gamma < 1/2 - \beta$.  Here we set $t_n = 1$, and get $0 < \Psi(t_n) =\Psi(1) < 1$, and 
\beq
|\Psi(t_n/\sigma) - \Psi(t_n)| \sim |(1/\sigma - 1) \Psi'(1)| \asymp |\sigma - 1|, \quad
\Lambda(t_n) \asymp 1,
\eeq
so that
\beq
u_n \asymp \sqrt{n} \eps |\sigma - 1| = n^{1/2 - \beta -\gamma} \gg \log n, \quad
v_n \asymp 1,
\eeq
and therefore \eqref{HC-power} is fulfilled, eventually.

The same arguments apply to the case where $\beta < 1/2$ and $\sigma \ne 1$ is fixed.  (It essentially corresponds to the previous case with $\gamma = 0$.)

The remaining case is where $\sigma > 1/\sqrt{1 - \beta}$ is fixed, with $\beta > 1/2$ (for otherwise it is included in the previous case).  We choose $t_n = \sqrt{2 q \log n}$, with $q := \beta/(1-1/\sigma^2)$, and get
\beq
t_n \bar\Psi(t_n) \asymp e^{-t_n^2/2} = n^{-q}, \quad
t_n \bar\Psi(t_n/\sigma) \asymp e^{-t_n^2/2\sigma^2} = n^{-q/\sigma^2},
\eeq
and
\beq
t_n \bar\Lambda(t_n) 
\asymp e^{-t_n^2/2} + \eps e^{-t_n^2/2\sigma^2}
= n^{-q} + n^{-\beta-q/\sigma^2} 
= 2 n^{-q},
\eeq
so that
\beq
u_n \asymp \frac{\sqrt{n} \eps e^{-t_n^2/2\sigma^2}/t_n}{\sqrt{e^{-t_n^2/2}/t_n}} \asymp n^{1/2 - \beta - q/\sigma^2 + q/2}/(\log n)^{1/4} \gg \log n, \quad
v_n \asymp 1,
\eeq
and therefore \eqref{HC-power} is fulfilled, eventually.
\end{proof}

\section{Some numerical experiments}

We performed some numerical experiments to investigate the finite sample performance of the tests considered here: the likelihood ratio test, the chi-squared test, the extremes test, the higher criticism test. 
The sample size $n$ was set large to $10^5$ in order to capture the large-sample behavior of these tests. We tried four scenarios with different combinations of $(\beta, \sigma)$. The p-values for each test are calibrated as follows:
\renewcommand{\theenumi}{(\alph{enumi})}
\renewcommand{\labelenumi}{\theenumi}
\benum  \setlength{\itemsep}{0in}
\item For the {\em likelihood ratio test} and the {\em higher criticism test}, we simulated the null distribution based on $10^4$  Monte Carlo replicates. 
\item For the {\em extremes test} and the {\em chi-squared test}, we used the exact null distribution, which in each case is available in closed form.
\eenum

For each combination of $(\beta, \sigma)$, we repeated the whole process 200 times and recorded the fraction of p-values  smaller than 0.05, representing the empirical power at the 0.05 level.  The result of this experiment is reported in \figref{numerics} and is largely congruent with the theory developed earlier in the paper.

\begin{figure}[!thpb]
\centering
\includegraphics[width=0.95\textwidth]{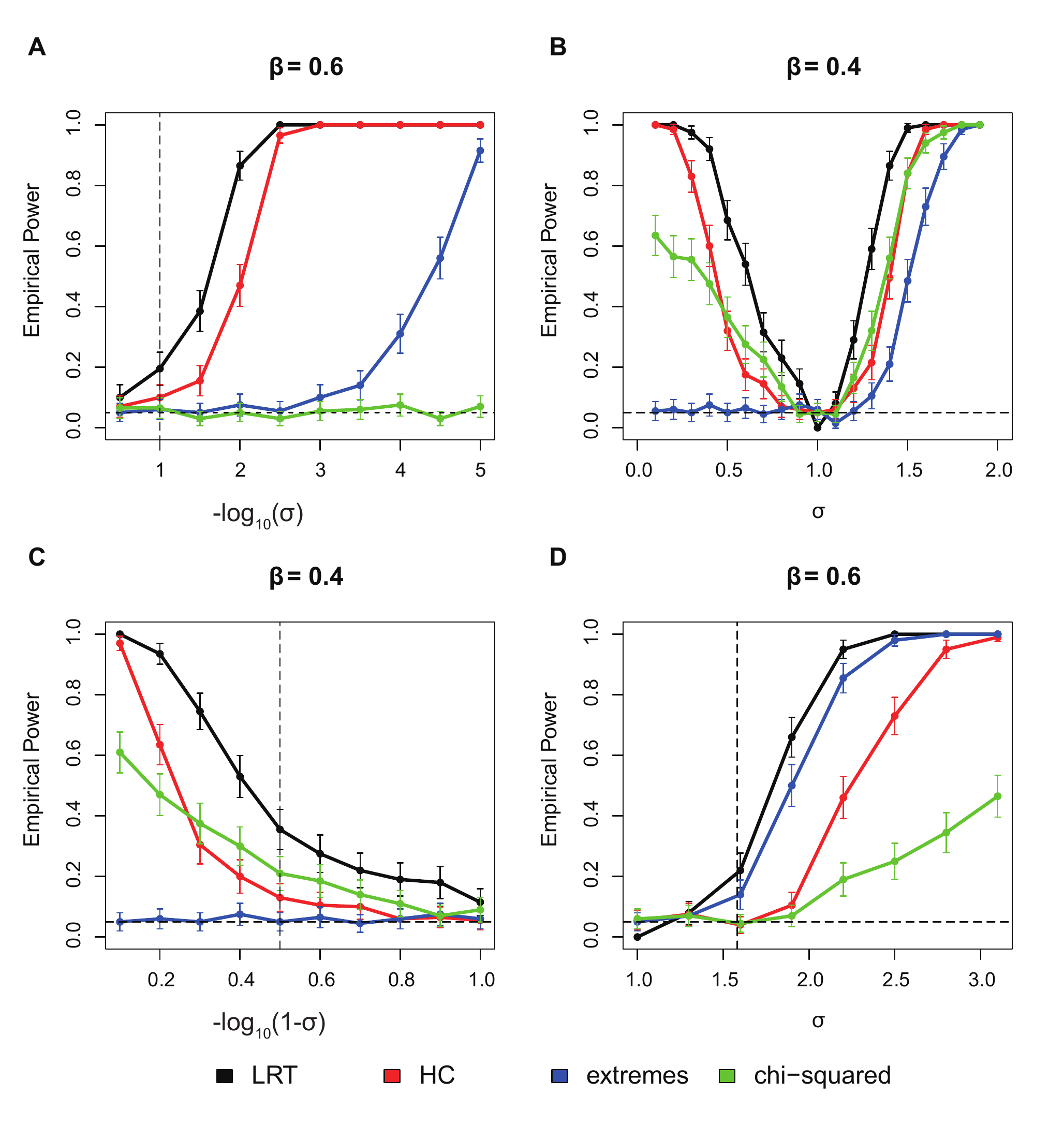}
\caption{Empirical power comparison with 95\% error bars. 
A. Sparse regime where $\beta = 0.6$ and $\sigma \to 0$. 
B. Dense regime where $\beta = 0.4$ and $\sigma$ fixed.  Note that the LR test is here asymptotically powerful at any $\sigma \ne 1$. 
C. Dense regime where $\beta = 0.4$ and $\sigma \to 1$. 
D. Sparse regime where $\beta = 0.6$ and $\sigma > 1$.  
Each time, the horizontal line marks the level (set at 0.05) and the vertical line marks the asymptotic detection boundary derived earlier in the paper.}
\label{fig:numerics}
\end{figure}


\bibliographystyle{abbrvnat}
\bibliography{ref}

\end{document}